\def\pmod #1{\ ({\rm{mod}}\ #1)}
\def\Z{\mathbb Z}
\def\bg{\bigg}
\def\({\bg(}
\def\){\bg)}
\def\Ack{\medskip\noindent {\bf Acknowledgments}}
\theoremstyle{plain}
\newtheorem{theorem}{Theorem}
\newtheorem{lemma}{Lemma}
\theoremstyle{definition}
\theoremstyle{remark}
\newtheorem{remark}{Remark}
\begin{document}

\hbox{}

\title
{On the divisibility of some truncated hypergeometric series}

\author
{Guo-Shuai Mao}

\address {Department of Mathematics, Nanjing
University, Nanjing 210093, People's Republic of China}
\email{mg1421007@smail.nju.edu.cn}

\author{Hao Pan}
\address {Department of Mathematics, Nanjing
University, Nanjing 210093, People's Republic of China}
\email{haopan79@zoho.com}

\keywords{central binomial coefficients, congruences.
\newline \indent 2010 {\it Mathematics Subject Classification}. Primary 11B65; Secondary 05A10, 11A07.
\newline \indent The first author is supported by the Natural Science Foundation of China (grant 11571162).}

\begin{abstract} Let $p$ be an odd prime and $r\geq 1$. Suppose that $\alpha$ is a $p$-adic integer with $\alpha\equiv2a\pmod p$ for some $1\leq a<(p+r)/(2r+1)$. We confirm a conjecture of Sun and prove that
$${}_{2r+1}F_{2r}\bigg[\begin{matrix}\alpha&\alpha&\ldots&\alpha\\
&1&\ldots&1\end{matrix}\bigg|\,1\bigg]_{p-1}\equiv0\pmod{p^2},$$
where the truncated hypergeometric series
$$
{}_{q+1}F_{q}\bigg[\begin{matrix}x_0&x_1&\ldots&x_{q}\\
&y_1&\ldots&y_q\end{matrix}\bigg|\,z\bigg]_{n}:=\sum_{k=0}^n\frac{(x_0)_k(x_1)_k\cdots(x_q)_k}{(y_1)_k\cdot (y_q)_k}\cdot\frac{z^k}{k!}.
$$
\end{abstract}
\maketitle

\section{Introduction}
\setcounter{lemma}{0}
\setcounter{theorem}{0}
\setcounter{corollary}{0}
\setcounter{remark}{0}
\setcounter{equation}{0}

For an odd prime $p$, let $\Z_p$ denote the ring of all $p$-adic integers. 
In \cite[Theorem 1.3 (i)]{SunZW12}, Sun  proved that  if $x\in\Z_p$ and $x\equiv -2a\pmod p$ for some $1\leq a\leq (p-1)/3$, then
\begin{equation}\label{Sunr3}
\sum_{k=0}^{p-1}(-1)^k\binom{x}{k}^3\equiv 0\pmod{p^3}.
\end{equation}
Motivated by (\ref{Sunr3}),  Sun conjectured \cite[Conjecture 4.4]{SunZW12} that for any integer $r\geq 2$ and odd prime $p$, if $x\in\Z_p$ and $x\equiv -2a\pmod p$ for some $1\leq a\leq (p+1)/(2r+1)$, then
\begin{equation}\label{SunConj}
\sum_{k=0}^{p-1}(-1)^k\binom{x}{k}^{2r+1}\equiv 0\pmod{p^2}.
\end{equation}

Define the truncated hypergeometric series
$$
{}_{r+1}F_{r}\bigg[\begin{matrix}x_0&x_1&\ldots&x_{r}\\
&y_1&\ldots&y_r\end{matrix}\bigg|\,z\bigg]_{n}=\sum_{k=0}^n\frac{(x_0)_k(x_1)_k\cdots(x_r)_k}{(y_1)_k\cdot (y_r)_k}\cdot\frac{z^k}{k!},
$$
where
$$
(x)_k=\begin{cases}
x(x+1)\cdots(x+k-1),&\text{if }k\geq 1,\\
1,&\text{if }k=0.
\end{cases}
$$
In fact, the truncated hypergeometric series is just the sum of the first finite terms of the original hypergeometric series. 
In the recent years, the arithmetic properties of the truncated hypergeometric series have been widely investigated (cf. \cite{AhOn00,DFLST16,He17,Liu17,LoRa16,Mo04,Mo05,OsSc09,OsStZu,SunZH13,SunZW13,Ta12}).
Note that
$$
\binom{x}{k}=(-1)^k\cdot\frac{(-x)_k}{(1)_k}
.$$ Clearly the left side of (\ref{SunConj}) coincides with
$$
{}_{2r+1}F_{2r}\bigg[\begin{matrix}-x&-x&\ldots&-x\\
&1&\ldots&1\end{matrix}\bigg|\,1\bigg]_{p-1}.
$$

In this short note, we shall give an affirmative answer to Sun's conjecture.
\begin{theorem}\label{main} 
Let $p$ be an odd prime and $r\geq 1$. 
Suppose that $\alpha\in\Z_p$ and $\alpha\equiv 2a\pmod p$ for some $1\leq a<(p+r)/(2r+1)$.
 Then
\begin{equation}\label{SunTH}
{}_{2r+1}F_{2r}\bigg[\begin{matrix}\alpha&\alpha&\ldots&\alpha\\
&1&\ldots&1\end{matrix}\bigg|\,1\bigg]_{p-1}\equiv 0\pmod{p^2}.
\end{equation}
\end{theorem}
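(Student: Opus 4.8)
The plan is to work with the finite sum $S:=\sum_{k=0}^{p-1}(-1)^k\binom{\alpha}{k}^{2r+1}$ (equivalently the truncated series in \eqref{SunTH}) and to exploit the symmetry $k\leftrightarrow p-1-k$. Writing $\alpha=2a+p\beta$ with $\beta\in\Z_p$, one first reduces mod $p^2$ to the case $\alpha=2a$ (an integer): since each $\binom{\alpha}{k}$ is a $p$-adic integer and the map $\alpha\mapsto\binom{\alpha}{k}^{2r+1}$ has derivative a $p$-adic integer, $S(\alpha)\equiv S(2a)+p\beta\cdot S'(2a)\pmod{p^2}$, so it suffices to prove $S(2a)\equiv0\pmod{p^2}$ \emph{and} $S'(2a)\equiv0\pmod p$. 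The second of these is the vanishing mod $p$ of a logarithmic-derivative–weighted sum $\sum_k(-1)^k\binom{2a}{k}^{2r+1}\big(\sum_{j=1}^{k}\frac1{2a-j}\big)$, which should follow from the same structural identity used for the first part.

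For the integer case $x=2a$ with $1\le a<(p+r)/(2r+1)$, the key observation is the following: for $0\le k\le p-1$, compare $\binom{2a}{k}$ with $\binom{2a}{p-1-k}$. Using $\binom{2a}{p-1-k}=\binom{2a}{k}\cdot\frac{(2a-k)(2a-k+1)\cdots}{\cdots}$ one shows that, \emph{modulo $p$}, $\binom{2a}{p-1-k}\equiv c_a\binom{2a}{k}$ for a constant $c_a$ depending only on $a$ (this is where the hypothesis $a<(p+r)/(2r+1)$, guaranteeing $2a<p$ and controlling which factors are divisible by $p$, enters); in fact one gets $\binom{2a}{p-1-k}\equiv(-1)^{?}\binom{2a}{k}\pmod p$ in the relevant range, and the pairing $k\leftrightarrow p-1-k$ then forces $S(2a)=\sum_k(-1)^k\binom{2a}{k}^{2r+1}$ to be a sum that cancels mod $p$ term-by-term, giving $S(2a)\equiv0\pmod p$ for free. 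Upgrading from $p$ to $p^2$ requires the sharper comparison: write $\binom{2a}{p-1-k}=\binom{2a}{k}\cdot A_k$ where $A_k\equiv A_k^{(0)}+pA_k^{(1)}\pmod{p^2}$, expand $A_k^{2r+1}$, and use the mod-$p$ pairing together with the fact that the "error" terms organize into sums of the shape $\sum_k(-1)^k\binom{2a}{k}^{2r+1}H_k$ with $H_k$ a harmonic-type sum; these sub-sums must be shown to vanish mod $p$.

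The cleanest route to both the mod $p^2$ refinement of the integer case and the derivative condition is to find a single polynomial/combinatorial identity — a "creative telescoping" or WZ-style certificate — expressing the partial sums $\sum_{k=0}^{n}(-1)^k\binom{x}{k}^{2r+1}$ (or an $x$-derivative thereof) in closed form, then evaluate at $n=p-1$ and use that $\binom{x}{k}$ has $p$ in the numerator for $a<k\le p-1-a$ once $x\equiv2a\pmod p$. Concretely I would look for an auxiliary sum $T(x,n)=\sum_{k}(-1)^k\binom{x}{k}^{2r}\binom{x-1}{k}\cdot(\text{rational function of }x,k)$ with $T(x,p-1)-T(x,0)=S(x)$, chosen so that the summand of $T$ is manifestly divisible by $p^2$ at $x=2a$ for every $k$. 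The main obstacle, and the technical heart of the paper, will be producing this identity for general $r$: for $r=1$ the cube case already required $p^3$ via a nontrivial identity of Sun, and pushing a uniform-in-$r$ telescoping through while keeping track of $p$-divisibility of every factor (especially near $k\approx a$ and $k\approx p-1-a$, where one binomial is a unit and its mirror carries the factor of $p$) is where the real work lies; the reduction to the integer case and the symmetry argument are comparatively routine.
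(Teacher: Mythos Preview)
Your reduction is set up inefficiently and your symmetry is the wrong one. Expanding around $\alpha=2a$ leaves you two separate tasks, $S(2a)\equiv0\pmod{p^2}$ and $S'(2a)\equiv0\pmod p$. The paper expands instead around $\alpha=2a-p$: there the truncated series equals $\sum_{k=0}^{p-2a}(-1)^k\binom{p-2a}{k}^{2r+1}$, which vanishes \emph{exactly} by the elementary symmetry $k\leftrightarrow (p-2a)-k$ (not your $k\leftrightarrow p-1-k$; note also that the hypergeometric with upper parameters $\alpha$ equals $\sum_k(-1)^k\binom{-\alpha}{k}^{2r+1}$, not $\sum_k(-1)^k\binom{\alpha}{k}^{2r+1}$ as you wrote). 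This collapses everything to the single condition $\psi'(0)\equiv0\pmod p$, which after a chain-rule factor $2r+1$ becomes $\phi'(0)\equiv0\pmod p$ with $\phi$ varying only one numerator slot.

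The genuine gap is how you handle that derivative. Directly one gets $\phi'(0)\equiv-\sum_{k}\frac{(2a)_k^{2r+1}}{(1)_k^{2r+1}}H_{2a+k-1}\pmod p$; to show this vanishes you appeal to an unspecified WZ certificate and concede that producing it ``is where the real work lies.'' The paper's mechanism is concrete and entirely different: it evaluates $\phi(2p)$ exactly via the Karlsson--Minton summation formula---and this is precisely where the hypothesis $a<(p+r)/(2r+1)$, equivalently $2p-2a>2r(2a-1)$, is used; your claim that the bound merely ensures $2a<p$ and ``controls which factors are divisible by $p$'' misses the point. Expanding $\phi(2p)$ $p$-adically then yields a \emph{second} formula $\phi'(0)\equiv 2r\sum_{k}\frac{(2a)_k^{2r+1}}{(1)_k^{2r+1}}H_{2a+k-1}\pmod p$, and together with the harmonic reflection $H_k\equiv H_{p-1-k}\pmod p$ the two formulas force $(2r+1)\phi'(0)\equiv0$. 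Without an ingredient playing the role of Karlsson--Minton you have one equation for $\phi'(0)$ and no way to close it; a hoped-for telescoping identity for general $r$ is not a proof.
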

Notice that the permitted range of $\alpha$ in Theorem \ref{main} is a little larger than the one conjectured by Sun.

Furthermore, we mention that Theorem \ref{main} (or just (\ref{Sunr3})) also implies another conjecture of Sun.
In \cite[Remark 1.2]{SunZW11}, Sun conjectured that for any prime $p$ with $p\equiv1\pmod{4}$,
\begin{equation}\label{SunConjCat}
\sum_{k=0}^{\frac{1}2(p-1)}\frac{C_k^{3}}{64^{k}}\equiv8\pmod{p^2},
\end{equation}
where 
$$
C_k:=\frac1{k+1}\binom{2k}{k}
$$
is the $k$-th Catalan number. It is easy to check that
$$
\frac{C_k}{4^k}=\frac{1}{k+1}\cdot\frac{(\frac12)_k}{(1)_k}=-2\cdot\frac{(-\frac12)_{k+1}}{(1)_{k+1}}.
$$
We also have $C_k\equiv 0\pmod{p}$ for each $(p+1)/2\leq k\leq p-2$. So (\ref{SunConjCat}) is actually equivalent to
\begin{equation}\label{3F212}
\sum_{k=0}^{p-1}\frac{(-\frac12)_k^3}{(1)_k^3}\equiv 0\pmod{p^2}.
\end{equation}
When $p\equiv 1\pmod{4}$, 
$$
-\frac12\equiv\frac{p-1}{2}=2\cdot\frac{p-1}{4}\pmod{p}.
$$
Thus since $(p-1)/4<(p+1)/3$, (\ref{3F212}) immediately follows from Theorem \ref{main} by setting $\alpha=-1/2$ and $r=1$.

Our proof of Theorem \ref{main}, which will be given in the subsequent section,  follows  a similar way in \cite{MP}. We shall construct a polynomial $\psi(x)\in\Z_p[x]$ with $\psi(p)=0$ such that
$$
{}_{2r+1}F_{2r}\bigg[\begin{matrix}\alpha&\alpha&\ldots&\alpha\\
&1&\ldots&1\end{matrix}\bigg|\,1\bigg]_{p-1}=\psi(sp)
$$
for some $s\in\Z_p$. Then the proof of (\ref{SunTH}) can be easily reduced to show that $\psi'(0)$ is divisible by $p$.

\section{Proof of Theorem \ref{main}}
 \setcounter{lemma}{0}
\setcounter{theorem}{0}
\setcounter{corollary}{0}
\setcounter{remark}{0}
\setcounter{equation}{0}
\setcounter{conjecture}{0}

First, let us introduce several auxiliary lemmas.
\begin{lemma}\label{KM}
Let $m_1,\ldots,m_r$ be non-negative integers and $a$ be a positive integer. If $a>m_1+\ldots+m_r$, then
\begin{equation}\label{KMe}
{}_{r+1}F_{r}\bigg[\begin{matrix}-a&1+m_1&\ldots&1+m_r\\
&1&\ldots&1\end{matrix}\bigg|\,1\bigg]=0.
\end{equation}
\end{lemma}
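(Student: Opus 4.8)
The identity (\ref{KMe}) is a classical finite binomial identity of Karlsson--Minton type, and the plan is to give a short self-contained proof via finite differences. Since $(-a)_k=0$ for every integer $k>a$, the series on the left of (\ref{KMe}) is really the finite sum $\sum_{k=0}^{a}\frac{(-a)_k(1+m_1)_k\cdots(1+m_r)_k}{(1)_k^{\,r}\,k!}$. The first step is to rewrite each factor in closed form: $(-a)_k/k!=(-1)^k\binom{a}{k}$, while $(1+m_i)_k/(1)_k=(k+m_i)!/(m_i!\,k!)=\binom{k+m_i}{m_i}$ for $1\leq i\leq r$. Hence the left-hand side of (\ref{KMe}) equals
$$
\sum_{k=0}^{a}(-1)^k\binom{a}{k}\prod_{i=1}^{r}\binom{k+m_i}{m_i}.
$$

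The next step is a degree count. Each factor $\binom{k+m_i}{m_i}=\frac{(k+1)(k+2)\cdots(k+m_i)}{m_i!}$ is, viewed as a function of $k$, a polynomial of degree exactly $m_i$; therefore $P(k):=\prod_{i=1}^{r}\binom{k+m_i}{m_i}$ is a polynomial in $k$ of degree $m_1+\cdots+m_r$, which by hypothesis is strictly smaller than $a$.

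Finally I would invoke the classical fact that, for every polynomial $P$ with $\deg P<a$, one has $\sum_{k=0}^{a}(-1)^k\binom{a}{k}P(k)=0$. This follows at once by writing the sum as $(-1)^a(\Delta^a P)(0)$, where $\Delta$ is the forward difference operator $\Delta f(k)=f(k+1)-f(k)$, and observing that $\Delta$ strictly lowers the degree of a nonconstant polynomial, so $\Delta^a$ annihilates everything of degree below $a$; alternatively one expands $(1-x)^a$ and uses $\sum_{k}(-1)^k\binom{a}{k}k^m=0$ for $0\leq m<a$. Applying this with the polynomial $P$ from the previous step yields (\ref{KMe}). I do not expect a genuine obstacle here; the only points needing a little care are the Pochhammer-to-binomial bookkeeping in the first step and checking that the degree of $P$ is matched precisely against the hypothesis $a>m_1+\cdots+m_r$.
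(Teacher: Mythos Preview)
Your argument is correct. The Pochhammer-to-binomial rewriting is accurate, the degree count is right, and the finite-difference identity $\sum_{k=0}^{a}(-1)^k\binom{a}{k}P(k)=(-1)^a(\Delta^aP)(0)=0$ for $\deg P<a$ finishes the job cleanly.

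The paper, however, takes a different route: it does not prove the lemma at all but simply cites the Karlsson--Minton summation formula \cite[(12)]{Ka71}, of which (\ref{KMe}) is an immediate consequence. That formula evaluates ${}_{r+1}F_r$ with integral numerator--denominator parameter differences in closed form, and in the regime $a>m_1+\cdots+m_r$ the evaluation collapses to zero. So the paper's ``proof'' is a one-line appeal to the literature, whereas yours is a self-contained elementary argument via the $a$-th finite difference. Your approach has the advantage of requiring no external reference and of making the vanishing transparent (it is just a degree obstruction); the paper's citation has the advantage of situating the lemma within a known hypergeometric summation and would extend to cases where one wants the actual nonzero value of the sum, not merely its vanishing.
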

\begin{proof}
This is a consequence of the Karlsson-Minton formula \cite[(12)]{Ka71}.
\end{proof}
\begin{lemma}\label{r1Frm1} Suppose that $m$ is a positive odd integer and $r\geq 0$ is even. Then
\begin{equation}\label{r1Frm1e}
{}_{r+1}F_{r}\bigg[\begin{matrix}-m&-m&\ldots&-m\\
&1&\ldots&1\end{matrix}\bigg|\,1\bigg]=0.
\end{equation}
\end{lemma}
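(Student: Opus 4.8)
The plan is to expand the terminating hypergeometric series in (\ref{r1Frm1e}) in closed form and then finish by a \emph{sign-reversing involution} on the summation index. First I would use the identities $(1)_k=k!$ and $(-m)_k=(-1)^kk!\binom{m}{k}$ (the latter also showing that the $k$-th term vanishes as soon as $k>m$) to rewrite the left-hand side of (\ref{r1Frm1e}) as
$$
\sum_{k=0}^{m}\frac{\big((-m)_k\big)^{r+1}}{\big((1)_k\big)^{r}\,k!}
=\sum_{k=0}^{m}\frac{\big((-m)_k\big)^{r+1}}{(k!)^{r+1}}
=\sum_{k=0}^{m}(-1)^{k(r+1)}\binom{m}{k}^{r+1};
$$
the point is that the $r$ lower Pochhammer symbols $(1)_k$ together with the extra factor $k!$ in the definition of ${}_{r+1}F_r$ collapse to $(k!)^{r+1}$.

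Since $r$ is even, $r+1$ is odd, so $(-1)^{k(r+1)}=(-1)^k$ and the quantity to be shown to vanish is $S:=\sum_{k=0}^{m}(-1)^{k}\binom{m}{k}^{r+1}$. I would then apply the substitution $k\mapsto m-k$. Using $\binom{m}{m-k}=\binom{m}{k}$ and the fact that $(-1)^{m-k}=-(-1)^{k}$ because $m$ is odd, the $(m-k)$-th summand of $S$ is the negative of the $k$-th summand; moreover $k\mapsto m-k$ has no fixed point on $\{0,1,\ldots,m\}$, since a fixed point would force $2k=m$ with $m$ odd. Hence the $m+1$ summands cancel in pairs and $S=0$, which is precisely (\ref{r1Frm1e}).

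I do not expect any genuine obstacle; the only care needed is in the bookkeeping of the exponents and in tracking where the two parity hypotheses enter. The assumption that $r$ is even is used exactly to turn $(-1)^{k(r+1)}$ into the alternating sign $(-1)^{k}$, and the assumption that $m$ is odd is exactly what makes the map $k\mapsto m-k$ both sign-reversing and fixed-point-free. If either hypothesis is dropped the conclusion can fail, for instance $\sum_{k=0}^{m}\binom{m}{k}^{r+1}>0$ when $r$ is odd.
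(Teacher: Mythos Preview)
Your argument is correct and is essentially the same as the paper's: both rewrite the series as $\sum_{k=0}^m(-1)^k\binom{m}{k}^{r+1}$ and then use the substitution $k\mapsto m-k$ together with the oddness of $m$ to conclude $S=-S$. Your added remarks on the fixed-point-free involution and on where each parity hypothesis enters are nice elaborations, but the underlying proof is identical.
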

\begin{proof}
We have
$$
{}_{r+1}F_{r}\bigg[\begin{matrix}-m&-m&\ldots&-m\\
&1&\ldots&1\end{matrix}\bigg|\,1\bigg]=\sum_{k=0}^m\frac{(-m)_k^{r+1}}{(1)_k^{r+1}}=
\sum_{k=0}^m(-1)^k\binom{m}{k}^{r+1}.
$$
Then (\ref{r1Frm1e}) follows from the fact
$$
\sum_{k=0}^m(-1)^k\binom{m}{k}^{r+1}=\sum_{k=0}^m(-1)^{m-k}\binom{m}{m-k}^{r+1}=
-\sum_{k=0}^m(-1)^{k}\binom{m}{k}^{r+1}.
$$
\end{proof}
Define the $n$-th harmonic number
$$
H_n:=\sum_{k=1}^n\frac1k.
$$
In particular, we set $H_0=0$.
\begin{lemma}\label{2a2r1HkH2ak1} Suppose that $p$ is an odd prime and $0\leq a<p/2$ is an integer. 
Then for any $r\geq 0$,  
\begin{equation}
\sum_{k=0}^{p-2a}\frac{(2a)_k^{2r+1}}{(1)_k^{2r+1}}\cdot H_k\equiv
-\sum_{k=0}^{p-2a}\frac{(2a)_k^{2r+1}}{(1)_k^{2r+1}}\cdot H_{2a+k-1}\pmod{p}.
\end{equation}
\end{lemma}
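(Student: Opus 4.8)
The plan is to prove the congruence by applying the involution $k\mapsto p-2a-k$ to the summation range $\{0,1,\ldots,p-2a\}$: under this substitution the left-hand sum should turn into the right-hand sum, up to an overall sign $-1$. First I would record the bookkeeping that makes the statement meaningful: $(2a)_k/(1)_k=\binom{2a+k-1}{k}$ is an integer, and since $0\leq k\leq p-2a$ and $1\leq a$, all integers whose reciprocals occur (those appearing in $H_k$, in $H_{2a+k-1}$, and in the Pochhammer quotients) lie in $\{1,\ldots,p-1\}$, so every quantity in both sums is a $p$-adic integer. The degenerate case $a=0$ is trivial, so I would assume $a\geq1$ throughout.

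Next I would establish the two reflection congruences driving the involution. For the summand: since
$$\frac{(2a)_{p-2a-k}}{(1)_{p-2a-k}}=\binom{p-1-k}{p-2a-k}=\binom{p-1-k}{2a-1}=\frac{(p-1-k)(p-2-k)\cdots(p-(2a-1)-k)}{(2a-1)!},$$
reducing each of the $2a-1$ factors of the numerator modulo $p$ gives $\binom{p-1-k}{2a-1}\equiv(-1)^{2a-1}\binom{2a+k-1}{2a-1}=-\binom{2a+k-1}{k}\pmod{p}$, and raising to the odd power $2r+1$ yields $(2a)_{p-2a-k}^{2r+1}/(1)_{p-2a-k}^{2r+1}\equiv-(2a)_k^{2r+1}/(1)_k^{2r+1}\pmod{p}$. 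For the harmonic numbers I would invoke the standard fact $H_{p-1-j}\equiv H_j\pmod{p}$ for $0\leq j\leq p-1$ (which follows from $H_{p-1}\equiv0\pmod{p}$ together with $1/(p-i)\equiv-1/i\pmod{p}$), applied with $j=2a+k-1$, legitimate since $0\leq 2a+k-1\leq p-1$; this gives $H_{p-2a-k}\equiv H_{2a+k-1}\pmod{p}$.

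Finally I would reindex the left-hand sum by $k\mapsto p-2a-k$ and substitute the two reflections, obtaining
$$\sum_{k=0}^{p-2a}\frac{(2a)_k^{2r+1}}{(1)_k^{2r+1}}H_k=\sum_{k=0}^{p-2a}\frac{(2a)_{p-2a-k}^{2r+1}}{(1)_{p-2a-k}^{2r+1}}H_{p-2a-k}\equiv-\sum_{k=0}^{p-2a}\frac{(2a)_k^{2r+1}}{(1)_k^{2r+1}}H_{2a+k-1}\pmod{p},$$
which is exactly the assertion. I do not expect a genuine obstacle: the only points requiring care are getting the sign in the binomial reflection right — it is $(-1)^{2a-1}=-1$, which combines with $(-1)^{2r+1}=-1$ to produce an overall $-1$ rather than $+1$ — and checking at each step that we are manipulating honest $p$-adic integers, which is what the range conditions $1\leq a$ and $k\leq p-2a$ guarantee.
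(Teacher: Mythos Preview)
Your argument is correct and follows essentially the same route as the paper: reindex by the involution $k\mapsto p-2a-k$, use the harmonic reflection $H_{p-1-j}\equiv H_j\pmod p$ with $j=2a+k-1$, and track the sign on the Pochhammer quotient. The only cosmetic difference is that the paper first replaces $(2a)_k$ by $(2a-p)_k$ modulo $p$ so as to work with the symmetric binomial $\binom{p-2a}{k}$ (making the reflection an exact identity, with the sign coming from $(-1)^{p-2a-k}=-(-1)^k$), whereas you keep $\binom{2a+k-1}{k}$ and reflect $\binom{p-1-k}{2a-1}$ modulo $p$ directly; these are equivalent bookkeepings of the same computation.
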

\begin{proof} Clearly
\begin{align*}
\sum_{k=0}^{p-2a}\frac{(2a)_k^{2r+1}}{(1)_k^{2r+1}}\cdot H_k\equiv&
\sum_{k=0}^{p-2a}\frac{(2a-p)_k^{2r+1}}{(1)_k^{2r+1}}\cdot H_k
=\sum_{k=0}^{p-2a}(-1)^kH_k\binom{p-2a}k^{2r+1}\\
=&\sum_{k=0}^{p-2a}(-1)^{p-2a-k}H_{p-2a-k}\binom{p-2a}{p-2a-k}^{2r+1}\\
\equiv&-\sum_{k=0}^{p-2a}(-1)^{k}H_{2a+k-1}\binom{p-2a}{k}^{2r+1}\pmod{p},
\end{align*}
where in the last step we use the well-known fact
$$
H_{k}\equiv H_{p-1-k}\pmod{p}
$$
for any $0\leq k\leq p-1$.
\end{proof}
Now we are ready to prove Theorem \ref{main}.
Let
$$\psi(x)={}_{2r+1}F_{2r}\bigg[\begin{matrix}2a-x&2a-x&\ldots&2a-x\\
&1&\ldots&1\end{matrix}\bigg|\,1\bigg]_{p-1}.$$
Then by Lemma \ref{r1Frm1}, 
$$
\psi(p)={}_{2r+1}F_{2r}\bigg[\begin{matrix}2a-p&2a-p&\ldots&2a-p\\
&1&\ldots&1\end{matrix}\bigg|\,1\bigg]=0.
$$ 
Furthermore, for any integers $s_0,s_1,\ldots,s_r,t_1,\ldots,t_r$, evidently we have
\begin{align}\label{2r1F2rstp}
&{}_{2r+1}F_{2r}\bigg[\begin{matrix}2a-s_0p&2a-s_1p&\ldots&2a-s_rp\\
&1+t_1p&\ldots&1+t_rp\end{matrix}\bigg|\,1\bigg]_{p-1}\notag\\
\equiv&
{}_{2r+1}F_{2r}\bigg[\begin{matrix}2a-p&2a-p&\ldots&2a-p\\
&1&\ldots&1\end{matrix}\bigg|\,1\bigg]_{p-1}=0\pmod{p}.
\end{align}

Let $s=(2a-\alpha)/p$. By the Taylor expansion of $\psi(x)$, we have
\begin{align*}
{}_{2r+1}F_{2r}\bigg[\begin{matrix}\alpha&\alpha&\ldots&\alpha\\
&1&\ldots&1\end{matrix}\bigg|\,1\bigg]_{p-1}=\psi(sp)\equiv&\psi(p)+(s-1)p\cdot\psi'(p)\\
\equiv&(s-1)p\cdot\psi'(0)\pmod{p^2}.
\end{align*}
It suffices to show that $\psi'(0)$ is divisible by $p$.
Let$$\phi(x)={}_{2r+1}F_{2r}\bigg[\begin{matrix}2a-x&2a&\ldots&2a\\
&1&\ldots&1\end{matrix}\bigg|\,1\bigg]_{p-1}.$$
Then
\begin{equation}\label{psi2r1phi}
\psi'(x)=(2r+1)\phi'(x).
\end{equation}
If $2r+1\equiv 0\pmod{p}$, clearly $\psi'(0)\equiv 0\pmod{p}$. So we may assume that $2r+1$ is not divisible by $p$. Below we only need to
prove that
\begin{equation}
\phi'(0)\equiv0\pmod{p}.
\end{equation}

Our strategy is to compute $\phi'(0)$ modulo $p$ in two different ways.
One way is simple. Clearly
\begin{equation}\label{difffactoral}
\frac{d((m-x)_k)}{dx}\bigg|_{x=0}=-(m)_k\sum_{i=0}^{k-1}\frac1{m+i}=(m)_k(H_{m-1}-H_{m+k-1})
\end{equation}
for any positive integer $m$.
So we get
\begin{align}\label{phi0p}
\phi'(0)=&\sum_{k=0}^{p-1}\frac{(2a)_k^{2r}}{(1)_k^{2r+1}}\cdot\frac{d((2a-x)_k)}{dx}\bigg|_{x=0}\notag\\
=&\sum_{k=0}^{p-1}\frac{(2a)_k^{2r+1}}{(1)_k^{2r+1}}\cdot(H_{2a-1}-H_{2a+k-1})\notag\\
\equiv&-\sum_{k=0}^{p-2a}\frac{(2a)_k^{2r+1}}{(1)_k^{2r+1}}\cdot H_{2a+k-1}\pmod p,
\end{align}
by noting that $p$ divides $(2a)_k$ for $2a+1\leq k\leq p-1$.

However, the other way is a little complicated. 
According to Lemma \ref{KM}, 
we have
\begin{align}\label{phi2psum}
\phi(2p)=&{}_{2r+1}F_{2r}\bigg[\begin{matrix}2a-2p&2a&\ldots&2a\\
&1&\ldots&1\end{matrix}\bigg|\,1\bigg]_{p-1}\notag\\
=&{}_{2r+1}F_{2r}\bigg[\begin{matrix}2a-2p&2a&\ldots&2a\\
&1&\ldots&1\end{matrix}\bigg|\,1\bigg]-\sum_{k=0}^{p-2a}\frac{(2a-2p)_{p+k}(2a)_{p+k}^{2r}}{(1)_{p+k}^{2r+1}}\notag\\
=&-\sum_{k=0}^{p-2a}\frac{(2a-2p)_{p+k}(2a)_{p+k}^{2r}}{(1)_{p+k}^{2r+1}},
\end{align}
since
$
2p-2a>2r(2a-1)
$ now.
It is easy to check that
$$
\frac{(2a-2p)_p(2a)_p^{2r}}{(1)_p^{2r+1}}\equiv-1\pmod{p}.
$$
Then we get
\begin{align}\label{2a2p2a1pk}
\sum_{k=0}^{p-2a}\frac{(2a-2p)_{p+k}(2a)_{p+k}^{2r}}{(1)_{p+k}^{2r+1}}=&
\frac{(2a-2p)_p(2a)_p^{2r}}{(1)_p^{2r+1}}\sum_{k=0}^{p-2a}\frac{(2a-p)_{k}(2a+p)_{k}^{2r}}{(1+p)_{k}^{2r+1}}\notag\\
\equiv&-\sum_{k=0}^{p-2a}\frac{(2a-p)_{k}(2a+p)_{k}^{2r}}{(1+p)_{k}^{2r+1}}
\pmod{p^2},
\end{align}
since $p$ divides the last sum by (\ref{2r1F2rstp}).

While in view of (\ref{difffactoral}), for any $t\in\Z_p$, we have
\begin{align*}
(2a-tp)_k-(2a)_k\equiv&tp\cdot \frac{d((2a-x)_k)}{dx}\bigg|_{x=0}\\
\equiv&
tp\cdot (2a)_k(H_{2a-1}-H_{2a+k-1})\pmod{p^2}.
\end{align*}
It follows that
\begin{align}\label{2ap2ap1p2a1}
&\sum_{k=0}^{p-2a}\frac{(2a-p)_{k}(2a+p)_{k}^{2r}}{(1+p)_{k}^{2r+1}}-\sum_{k=0}^{p-2a}\frac{(2a)_{k}^{2r+1}}{(1)_{k}^{2r+1}}\notag\\
\equiv&p\sum_{k=0}^{p-2k}\frac{(2a)_{k}^{2r+1}}{(1)_{k}^{2r+1}}\cdot
\big((1-2r)\cdot (H_{2a-1}-H_{2a+k-1})-(2r+1)H_k\big)\notag\\
\equiv&p\sum_{k=0}^{p-2k}\frac{(2a)_{k}^{2r+1}}{(1)_{k}^{2r+1}}\cdot
\big((2r-1)H_{2a+k-1}-(2r+1)H_k\big)
\pmod{p^2},
\end{align}
where (\ref{2r1F2rstp}) is applied again in the last step.

Finally, combining (\ref{2ap2ap1p2a1}) with (\ref{phi2psum}) and (\ref{2a2p2a1pk}), and applying Lemma \ref{2a2r1HkH2ak1}, we obtain that
\begin{align}\label{phi0p2r}
\phi'(0)\equiv&\frac{\phi(2p)-\phi(0)}{2p}\notag\\
\equiv&\frac12\sum_{k=0}^{p-2k}\frac{(2a)_{k}^{2r+1}}{(1)_{k}^{2r+1}}\cdot
\big((2r-1)H_{2a+k-1}-(2r+1)H_k\big)\notag\\
\equiv&2r\sum_{k=0}^{p-2k}\frac{(2a)_{k}^{2r+1}}{(1)_{k}^{2r+1}}\cdot
H_{2a+k-1}\pmod{p}.
\end{align}
Recall that $2r+1$ has been assumed to be co-prime to $p$. Hence by (\ref{phi0p}) and (\ref{phi0p2r}), we must have
$$
\phi'(0)\equiv 0\pmod{p}.
$$
All are done.\qed

\begin{remark}
In view of (\ref{psi2r1phi}), we always have $\psi'(0)\equiv0\pmod{p}$ whenever $p$ divides $2r+1$. So if $\alpha\in\Z_p$ and $\alpha\equiv 2a\pmod{p}$ for some $1\leq a\leq (p-1)/2$, then
\begin{equation}
{}_{np}F_{np-1}\bigg[\begin{matrix}\alpha&\alpha&\ldots&\alpha\\
&1&\ldots&1\end{matrix}\bigg|\,1\bigg]_{p-1}\equiv 0\pmod{p^2}
\end{equation}
for any odd $n\geq 1$.
\end{remark}
\begin{remark}
Suppose that $r\geq 1$ is not divisible by $(p-1)/2$. Let $r_*\geq 1$ be the least positive residue of $r$ modulo $(p-1)/2$. Then by the Fermat little theorem,
\begin{align*}
\frac{d}{dx}\bigg(\sum_{k=0}^{p-1}\frac{(2a-x)_k(2a)_k^{2r}}{(1)_k^{2r+1}}\bigg)\bigg|_{x=0}
\equiv\frac{d}{dx}\bigg(\sum_{k=0}^{p-1}\frac{(2a-x)_k(2a)_k^{2r_*}}{(1)_k^{2r_*+1}}\bigg)\bigg|_{x=0}\pmod{p}.
\end{align*}
It follows that for any $p$-adic integer $\alpha$ with $\alpha\equiv2a\pmod{p}$ for some $1\leq a<(p+r_*)/(2r_*+1)$,
\begin{equation}
{}_{2r+1}F_{2r}\bigg[\begin{matrix}\alpha&\alpha&\ldots&\alpha\\
&1&\ldots&1\end{matrix}\bigg|\,1\bigg]_{p-1}\equiv 0\pmod{p^2}
\end{equation}
\end{remark}
 
\medskip
\Ack. The authors are grateful to Professor Zhi-Wei Sun for his very helpful comments on this paper.
\medskip

\setcounter{conjecture}{0} 
\begin{thebibliography}{ST10}



\bibitem {AhOn00} S. Ahlgren and K. Ono, {\it A Gaussian hypergeometric series evaluation and Ap\'ery number
congruences}, J. Reine Angew. Math. 518 (2000), 187–212.

\bibitem {DFLST16} A. Deines, J. G. Fuselier, L. Long, H. Swisher and F.-T. Tu, {\it Hypergeometric series, truncated hypergeometric series, and Gaussian hypergeometric functions}, Directions in number theory, 125--159, Assoc. Women Math. Ser., 3, Springer, 2016.

\bibitem {He17} B. He, {\it Supercongruences and truncated hypergeometric series}, Proc. Amer. Math. Soc. 145 (2017), 501-508.

\bibitem{Ka71} P. W. Karlsson, {\it Hypergeometric functions with integral parameter differences}, J. Math. Phys. {\bf 12} (1971), 270-271.



\bibitem {Liu17} J.-C. Liu,  {\it Congruences for truncated hypergeometric series $_2F_1$}, Bull. Aust. Math. Soc. {\bf 96} (2017), 14-23.

\bibitem{LoRa16} L. Long and R. Ramakrishna, {\it Some supercongruences occurring in truncated hypergeometric series}, Adv. Math. 290 (2016), 773-808.

\bibitem{MP} G.-S. Mao and H. Pan, {\it $p$-adic analogues of hypergeometric identities}, preprint,  arXiv:1703.01215.

\bibitem {Mo04} E. Mortenson, {\it Supercongruences between truncated ${}\sb 2F\sb 1$ hypergeometric functions and their Gaussian analogs},
Trans. Amer. Math. Soc. 355 (2003), 987-1007.

\bibitem {Mo05} E. Mortenson, {\it Supercongruences for truncated ${}_{n+1}F_n$ hypergeometric series with applications to certain weight three newforms}, Proc. Amer. Math. Soc. 133 (2005), 321-330.

\bibitem{OsSc09} R. Osburn and C. Schneider, {\it Gaussian hypergeometric series and supercongruences}, Math. Comp. {\bf 78} (2009), 275-292.

\bibitem{OsStZu} R. Osburn, A. Straub and W. Zudilin, {\it A modular supercongruence for $_6F_5$: an Ap\'ery-like story}, preprint, arXiv:1701.04098 .

\bibitem {SunZH13} Z.-H. Sun, {\it Congruences concerning Legendre polynomials II}, J. Number Theory {\bf 133} (2013), 1950-1976.

\bibitem{SunZW11} Z.-W. Sun, {\it On congruences related to central binomal coefficients}, J. Number Theory {\bf 131} (2011), 2219-2238.

\bibitem{SunZW12} Z.-W. Sun, {\it On sums of Ap\'{e}ry polynomials and related congruences}, J. Number Theory {\bf 132} (2012), 2673-2690.

\bibitem {SunZW13} Z.-W. Sun, {\it Supecongruences involving products of two binomial coefficients}, Finite Fields
Appl. {\bf 22} (2013), 24-44.

\bibitem {Ta12} R. Tauraso, {\it Supercongruences for a truncated hypergeometric series}, Integers {\bf 12} (2012), Paper No. A45, 12 pp.




\end{thebibliography}
\end{document}